\documentclass[11pt,a4paper]{article}
\usepackage[T1]{fontenc}
\usepackage{lmodern}
\usepackage[margin=27mm]{geometry}
\usepackage{amsmath,amssymb,amsthm,mathtools}
\usepackage{booktabs,array,longtable}
\usepackage{xcolor}
\usepackage[hidelinks]{hyperref}
\usepackage{microtype}
\usepackage{enumitem}
\setlist{itemsep=3pt,topsep=5pt}
\numberwithin{equation}{section}
\newtheorem{theorem}{Theorem}[section]
\newtheorem{lemma}[theorem]{Lemma}
\newtheorem{proposition}[theorem]{Proposition}

\theoremstyle{definition}
\newtheorem{definition}[theorem]{Definition}
\theoremstyle{remark}

\DeclareMathOperator{\per}{per}
\DeclareMathOperator{\tr}{tr}
\DeclareMathOperator{\rank}{rank}

\newcommand{\C}{\mathbb C}
\newcommand{\E}{\mathbb E}
\newcommand{\F}{\mathcal F}
\newcommand{\HH}{\mathcal H}
\newcommand{\pos}[1]{(#1)_+}
\newcommand{\rhoa}{\frac{999991742359}{10^{12}}}
\newcommand{\rhob}{\frac{999815240367}{10^{12}}}
\newcommand{\code}[1]{{\small\nolinkurl{#1}}}
\title{Quantitative Chollet Inequalities for Matrices\newline of Rank at Most Four\newline
\large Spectral bounds and an order-nine tight-frame construction}
\author{Yicen Ma}
\date{28 September 2026}
\begin{document}
\maketitle
\begin{abstract}
Chollet's permanent conjecture asks whether
$\per(A\circ B)\leq\per(A)\per(B)$ for complex Hermitian positive
semidefinite matrices. We present computer-assisted proofs of two restricted
forms with explicit constants strictly smaller than one. For every $n\geq10$
and every such matrix $A$ of rank at most four, the self-conjugate ratio
$\per(A\circ\overline A)/\per(A)^2$ is bounded by $999991742359/10^{12}$
when the denominator is nonzero. For order nine we obtain the bound
$999815240367/10^{12}$ for correlation matrices whose four nonzero
eigenvalues all equal $9/4$. The first argument combines complex-sphere
integrals, spectral subspace tilts, projection bounds, exact finite covers,
and an analytic infinite tail. The second uses a quadratic relation among
nine Gram vectors, a positive operator on the ten-dimensional space of
quadratic forms, and rigorously bounded entropy. All decisive finite
calculations use rational arithmetic and full closed-domain certificates.
The unrestricted conjecture, including general non-tight order-nine
rank-four matrices, is outside these results.
\end{abstract}

\noindent\textbf{Keywords.} Permanent; Hadamard product; positive semidefinite
matrix; tight frame; sphere integral; computer-assisted proof.

\medskip
\noindent\textbf{Review record.} This version includes mathematical
self-review and a fresh exact replay of its certificate package. These
checks are documented in the accompanying review report; an external
independent review and a complete priority determination are not claimed.
Affiliation metadata has been left unset.

\tableofcontents

\section{Introduction and statements}
For an $n\times n$ matrix $A=(a_{ij})$, write
\[
 \per A=\sum_{\sigma\in S_n}\prod_{i=1}^n a_{i,\sigma(i)}.
\]
The symbol $A\circ B$ denotes the entrywise product, $\overline A$
entrywise conjugation, and $A^*$ the conjugate transpose. Chollet's question
\cite{chollet} concerns the inequality
\begin{equation}\label{eq:chollet}
 \per(A\circ B)\leq\per(A)\per(B),\qquad A,B\succeq0.
\end{equation}
This paper treats two specified classes of complex Hermitian matrices.
No conclusion about arbitrary ranks is inferred from either class.

The order-four case is established in work of Rodtes \cite{rodtes}; Zhang
\cite{zhang} discusses the conjecture and several stronger permanent
proposals. Recent work of Pant and Singh \cite{pant} treats matrices with
bipartite support and constructions preserving the inequality. A recent
preprint \cite{six} claims the unrestricted result through order six.
Neither that claim nor an induction through small orders is used here.
The present organization separates its mathematical arguments from the
exact finite computations on which the two quantitative bounds depend.
We make no assertion of priority relative to all existing literature.

\begin{definition}
A \emph{correlation matrix} is a Hermitian positive semidefinite matrix
with diagonal entries one. An order-nine correlation matrix is called
\emph{rank-four tight} if it has four nonzero eigenvalues, each equal to
$9/4$. Equivalently, it has unit Gram rows $v_i\in\C^4$ with
\begin{equation}\label{eq:tight}
 C_{ij}=v_i\overline{v_j}^{\,T},\qquad
 S:=\sum_{i=1}^9v_i^*v_i=\frac94I_4.
\end{equation}
\end{definition}

\begin{theorem}\label{thm:large}
Let $n\geq10$, and let $A$ be an $n\times n$ complex Hermitian positive
semidefinite matrix with $\rank A\leq4$. Then
\begin{equation}\label{eq:large}
 \per(A\circ\overline A)\leq\rho_4\per(A)^2,
 \qquad \rho_4=\rhoa<1.
\end{equation}
If $A$ has a zero diagonal entry, both sides before multiplication by
$\rho_4$ vanish. If all its diagonal entries are positive, $\per A>0$.
For two matrices $A,B$ of the same order, both of rank at most four,
\begin{equation}\label{eq:largepair}
 \per(A\circ B)\leq\rho_4\per(A)\per(B).
\end{equation}
\end{theorem}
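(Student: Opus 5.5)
We first reduce to a normalized setting, then use an integral representation of the permanent and split into a large-$n$ analytic tail and a finite range of orders.

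\emph{Normalization.} If $a_{ii}=0$, then positivity forces row $i$ of $A$ to vanish. The same holds for $A\circ\overline A$, so both permanents are zero. If all $a_{ii}>0$, conjugate by $D=\mathrm{diag}(a_{ii}^{-1/2})$. This multiplies $\per A$ by $\prod a_{ii}^{-1}$ and $\per(A\circ\overline A)$ by $\prod a_{ii}^{-2}$, so the ratio is unchanged and we may assume $A$ is a correlation matrix with Gram rows $v_i\in\C^4$, $|v_i|=1$.

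\emph{Integral representation.} For such Gram matrices,
\[
 \per A=\binom{n+3}{3}\int_{S^7}\prod_i|\langle v_i,u\rangle|^2\,d\sigma(u),
\]
where $\sigma$ is the uniform measure on the complex unit sphere of $\C^4$. The integrand is nonnegative and not identically zero, so $\per A>0$. The entries of $A\circ\overline A$ are $|\langle v_i,v_j\rangle|^2$, which are the Gram entries of $v_i\otimes\overline v_i$ in the symmetric/Hermitian $16$-dimensional space. A second representation therefore writes $\per(A\circ\overline A)$ as a sphere integral of $\prod_i\langle v_i\otimes\overline v_i, w\rangle$-type products.

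\emph{Pair inequality.} The statement \eqref{eq:largepair} should follow from the self-conjugate bound by Cauchy--Schwarz applied to the integral representations. This should give
\[
 \per(A\circ B)^2\le\per(A\circ\overline A)\,\per(B\circ\overline B),
\]
a Marcus-type inequality for Hadamard products. Then \eqref{eq:large} for $A$ and $B$ separately yields \eqref{eq:largepair}.

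\emph{Reduction to a ratio of integrals.} It remains to bound
\[
 R(v)=\frac{\per(A\circ\overline A)}{\per(A)^2}
\]
uniformly over configurations of $n\ge 10$ unit vectors in $\C^4$. The plan is to expand $\per(A)^2$ as a double integral over $(u,u')\in S^7\times S^7$. Tensoring gives $|\langle v_i,u\rangle|^2|\langle v_i,u'\rangle|^2=|\langle v_i\otimes\overline{v_i},\,u\otimes\overline{u'}\rangle|^2$. Comparing with the representation of $\per(A\circ\overline A)$, the numerator corresponds to integrating the same product over the full sphere of the $16$-dimensional space, whereas the denominator only sees product vectors. Writing $F(w)=\prod_i|\langle x_i,w\rangle|^2$ with $x_i=v_i\otimes\overline{v_i}$, we then need
\[
 \E_{\text{full}}F\le\rho_4\,c_n\,\E_{\text{prod}}F,
\]
where $c_n$ absorbs the binomial constants.

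\emph{Large $n$.} For $n\to\infty$, both integrals concentrate near the maximizers of $\sum\log|\langle x_i,w\rangle|^2$. A Laplace-type estimate should show that the ratio constant decays to a limit below $1$. This gives an analytic bound for all $n\ge N_0$, which is the ``infinite tail.''

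\emph{Finite orders $10\le n<N_0$.} Here we use the spectral data of $S=\sum v_i^*v_i$, which has trace $n$. Tilting the sphere measure by the eigenspaces of $S$ and applying projection bounds gives a bound on $R$ as a function of the eigenvalue vector. The compact eigenvalue simplex is then covered by finitely many boxes, each certified in exact rational arithmetic.

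\emph{Main obstacle.} The hardest step is near-tight configurations, where $S\approx\frac n4I_4$. There the crude bounds come closest to $1$, which is why $\rho_4$ is so close to $1$. I would handle this region first by a second-order expansion of the ratio around $S=\frac n4 I$, bounding the fourth-moment tensor of the frame uniformly. Only afterwards would I fit the cover boxes, so that the rational certificate closes with margin $1-\rho_4\approx 8\cdot10^{-6}$.
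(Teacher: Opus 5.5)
\textbf{Verdict: there is a genuine gap.} Nothing in your plan bounds the numerator $Q=\per(A\circ\overline A)$ from above.

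Your normalization is fine. The pair inequality you want is correct: it follows from Cauchy--Schwarz over permutations, $|\sum_\sigma\prod_i a_{i\sigma(i)}b_{i\sigma(i)}|^2\le\per(A\circ\overline A)\per(B\circ\overline B)$. Your outer plan (sphere lower bounds, spectral tilts and projections, exact covers, an analytic tail) matches the paper's.

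The gap: spectral tilts and projections give only \emph{lower} bounds on $P=\per A$. The ratio is not a function of the frame operator $S$. So a bound ``as a function of the eigenvalue vector'' needs a separate spectral upper bound on $Q$, and your 16-dimensional reformulation does not supply one. The paper's missing ingredient is a Bregman-type estimate:
\begin{itemize}
\item $A\circ\overline A$ has entries $|C_{ij}|^2\in[0,1]$ and row sums $s_i=(C^2)_{ii}\in[1,\lambda_1]$, with $\sum_i s_i=\sum_j\lambda_j^2$.
\item Bregman's theorem, extended to $[0,1]$-matrices by multilinearity in the rows, gives $Q\le\prod_i b(s_i)$, where $b$ interpolates $(k!)^{1/k}$ piecewise affinely.
\item Concavity of $b$ on $[1,13]$ then gives $Q\le b(m)^n$ with $m=\frac1n\sum_j\lambda_j^2$ for $n\le13$. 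In general $Q\le b(\lambda_1)^n$.
\end{itemize}
Nothing cruder works at the bottom orders. For a tight frame of order ten the lower bounds give only $P\approx11.3$, so $P^2\approx128$. By contrast $\prod_i s_i=2.5^{10}\approx9.5\cdot10^{3}$, and the trivial bound $Q\le10!$ is far worse. It is $b(2.5)^{10}\approx121$ that brings the ratio below one.

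The same upper bound drives the tail, and your Laplace step cannot replace it. The integrand changes with $n$ and with the configuration, so concentration heuristics give no estimate uniform over all $v_1,\dots,v_n$. The paper instead does the following:
\begin{itemize}
\item It compares $Q\le((\lambda_1+1)/2)^n$ with the two uniform Jensen bounds $P\ge D_ne^{-11n/6}$ and $P\ge\frac{D_n}{4}e^{\lambda_1-1-25(n-1)/12}$, where $D_n=(n+3)!/6$.
\item It maximizes over $\lambda_1\in[n/4,n]$.
\item It closes every $n\ge27$ with one exact base case and a ratio recurrence.
\end{itemize}
The resulting bound $T_n$ tends to zero super-exponentially, not merely to some limit below one.

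Smaller points:
\begin{itemize}
\item The constant in your sphere formula should be $(4)_n=(n+3)!/6$, not $\binom{n+3}{3}$. These already differ at $n=2$ with $v_1=v_2$.
\item No fourth-moment information is needed for $n\ge10$. The quartic operator appears in the paper only for order-nine tight frames, where spectral bounds genuinely fail.
\item A second-order expansion around $S=\frac n4I$ would not give a rigorous bound on a neighbourhood without remainder control.
\item The paper attributes the closeness of $\rho_4$ to one to the conservatism of its enclosures, not to a near-extremal configuration.
\end{itemize}
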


\begin{theorem}\label{thm:nine}
Every order-nine rank-four tight correlation matrix $C$ satisfies
\begin{equation}\label{eq:nine}
 \per(C\circ\overline C)\leq\rho_{9,t}\per(C)^2,
 \qquad \rho_{9,t}=\rhob<1.
\end{equation}
The same self-conjugate inequality holds for every positive-diagonal
positive semidefinite matrix whose correlation normalization is in this
class. The two-matrix inequality with constant $\rho_{9,t}$ holds when
\emph{both} correlation normalizations are in this class.
\end{theorem}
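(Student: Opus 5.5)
The plan follows the abstract's outline for the order-nine case. First I reduce to correlation matrices and to the self-conjugate inequality, then prove that inequality through an integral representation and a quadratic-form operator.

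\emph{Reduction.} Write a positive-diagonal matrix as $A=DCD$ with $D$ diagonal and positive. Then $\per A=\det(D)^2\per C$ and $A\circ\overline A=D^2(C\circ\overline C)D^2$, so both sides of \eqref{eq:nine} scale by $\det(D)^4$, and the second sentence of the theorem follows from the first.

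For two matrices $A,B$, I would use the standard Cauchy--Schwarz step on the Gram/sphere-integral representation:
\[
\per(A\circ B)\le \per(A\circ\overline A)^{1/2}\per(B\circ\overline B)^{1/2}.
\]
This is Lieb-type and follows from writing $\per(A\circ B)$ as an inner product of two symmetric tensors built from the Gram vectors. The pair statement then follows from the self-conjugate case applied to both normalizations. I would check this inequality carefully first, since everything downstream relies on it.

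\emph{Integral representation.} For $C_{ij}=\langle v_i,v_j\rangle$ with $v_i\in\C^4$, use
\[
\per C=\frac{(n+3)!}{3!\,}\int_{S^7}\prod_{i=1}^9|\langle v_i,u\rangle|^2\,d\sigma(u)
\]
with the correct normalization, where $n=9$. The matrix $C\circ\overline C$ has Gram vectors $v_i\otimes\overline{v_i}$, which live in the ten-dimensional real space of Hermitian quadratic forms on $\C^4$ after a suitable identification (one should verify that dimension count; it may be the symmetric part). This gives $\per(C\circ\overline C)$ as an integral of $\prod_i\langle Q_i,X\rangle^2$ over a sphere in that space, with $Q_i=v_i^*v_i$.

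The tightness relation $\sum_i Q_i=\tfrac94I$ is the quadratic relation among the nine Gram vectors. I would use it to project out the trace direction, and then bound the resulting integrand by AM--GM/entropy. Each product $\prod_i q_i$ subject to $\sum_i q_i$ fixed is maximized at equal values, and the deficit is controlled by a positive operator $T$ on the ten-dimensional space. The target is to show $\per(C\circ\overline C)/\per(C)^2\le\exp(-\lambda_{\min}(T)\cdot\text{const})$ together with an explicit entropy lower bound for $\per C$ coming from the tight-frame normalization, for example van der Waerden-type bounds.

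\emph{Main obstacle.} The decisive and hardest step is to obtain a margin below $1$ that is uniform over the whole compact family of tight frames, which is the Grassmannian-like quotient. This requires a rigorous lower bound on $\per C$ and a rigorous upper bound on the hypersphere integral, both holding on the entire closed domain. My first attempt would be to cover the parameter space by finitely many boxes, bound $T$ and the entropy terms in rational interval arithmetic on each box, and treat neighborhoods of degenerate frames (repeated vectors) analytically via a local expansion. The quantity $1-\rho_{9,t}\approx1.8\times10^{-4}$ indicates how tight the certificates must be.
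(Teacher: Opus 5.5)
\textbf{Verdict: there is a genuine gap.} Your reduction is correct and is essentially Lemma~\ref{lem:normalization}. The factor $\det(D)^4$ is right. The pair inequality is plain Cauchy--Schwarz over permutations, since $\per(A\circ\overline A)=\sum_\sigma\prod_i|a_{i\sigma(i)}|^2$; you also need $\per A,\per B\geq0$.

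The gap is the quantitative core, for which the proposal gives no working mechanism. For tight $C$ one has $C^2=\frac94C$, so every row of $C\circ\overline C$ sums to $9/4$. Lemma~\ref{lem:rows} then already gives $Q\leq b(9/4)^9\approx42.0$, and the paper does nothing more on this side. The plain sphere--Jensen bound gives $\per C\geq D_9e^{-33/2}\approx5.45$. For a tight frame the spectral tilts of Proposition~\ref{prop:subspace} give nothing better, so the ratio is bounded only by about $1.415$. The whole theorem is an extra gain of about $0.1737$ in $\log\per C$.

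None of your tools supplies this gain:
\begin{itemize}
\item Van der Waerden applies to the doubly stochastic matrix $\frac49(C\circ\overline C)$, so it bounds $Q$ from \emph{below}. It says nothing about $\per C$ for complex $C$.
\item Working on $Q$ is misdirected. Plain AM--GM on the Gaussian integral gives $Q\leq9!\,9^{-9}h_9(\mu)$, with $\mu$ the spectrum of $C\circ\overline C$. For tight $C$ this is never below about $55.8$, worse than the row-sum bound.
\item A direct box cover of all tight frames would live in about $24$ real dimensions even after symmetries, and you give no per-box bound.
\end{itemize}

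The missing idea is what the paper means by ``quadratic relation''; you identify it with tightness, but it is something else.
\begin{itemize}
\item The relevant ten-dimensional space is $\HH=\operatorname{Sym}^2(\C^4)$ of complex symmetric matrices, containing $a_i=v_i^Tv_i$. Hermitian forms on $\C^4$ form a $16$-dimensional real space.
\item The nine linear conditions $\overline v_iH\overline v_i^{\,T}=0$ in ten unknowns always have a nonzero solution. So the nine points lie on a common quadric, and $M=\sum_i|a_i\rangle\langle a_i|$ has a kernel vector $h$.
\item The paper puts all the work on $\per C$. It tilts the sphere measure by $f_W=10\sum_jw_j|p_{H_j}|^2$ with $W\succeq0$ on $\HH$. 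Jensen then gives $\per C\geq D_9\exp(-\frac{33}{2}+\frac9{20}-\frac12\tr(MW)-J(W))$ (Lemma~\ref{lem:quartic}).
\item Tightness enters separately: through the derivative identity, through $\tr(ML_K)=\frac92\sum_jk_j$, and through $M\preceq\frac94I$.
\item Taking $W$ concentrated near $h$ makes $\tr(MW)$ small. The cost of using a fixed center is controlled by $Mh=0$ and $M\preceq\frac94I$.
\item After Takagi diagonalization, $h$ is described by three normalized values $(t,u,v)$ in the domain \eqref{eq:domain}. The constraint $t+u+v\geq1$ comes from the relation and the column norms.
\end{itemize}
This is what reduces the verification to a $55$-box cover of a three-dimensional domain, uniform in all remaining frame data, with exact moments bounding $J$. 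Without the quadric relation and this tilted Jensen bound on $\per C$, your outline has no route to a constant below one.
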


The constants are certificate bounds and are not asserted to be optimal.
In particular, their proximity to one measures the conservatism of the
enclosures rather than the size of an actual extremal ratio. Theorem
\ref{thm:large} does not fill orders seven, eight, or nine at rank four.
Theorem \ref{thm:nine} covers a specified part of order nine.

\section{Normalization and row-sum upper bounds}
\subsection{Reduction to a self-conjugate estimate}
\begin{lemma}\label{lem:normalization}
Suppose a class of correlation matrices satisfies
$\per(C\circ\overline C)\leq\rho\per(C)^2$.
The inequality extends to matrices whose correlation normalization belongs
to that class. For any two such matrices of the same order,
$\per(A\circ B)\leq\rho\per(A)\per(B)$.
\end{lemma}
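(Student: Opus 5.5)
The argument has two parts. First, a diagonal scaling reduces everything to correlation matrices. Second, the two-matrix bound follows from the self-conjugate one through a Cauchy--Schwarz inequality for permanents of Hadamard products.

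\textbf{Scaling.} Let $A\succeq0$ have positive diagonal, and put $D=\operatorname{diag}(a_{11}^{1/2},\dots,a_{nn}^{1/2})$. Then $A=DCD$, where $C$ is the correlation normalization. Multilinearity in rows and columns gives $\per A=\det(D)^2\per C$. The same reasoning, applied to $A\circ\overline A=D^2(C\circ\overline C)D^2$, gives $\per(A\circ\overline A)=\det(D)^4\per(C\circ\overline C)$. Both sides of the inequality therefore scale by $\det(D)^4$, and the self-conjugate bound passes from $C$ to $A$. If some diagonal entry vanishes, positive semidefiniteness forces that whole row to be zero. Then both $\per A$ and $\per(A\circ\overline A)$ vanish and there is nothing to prove. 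I read the lemma as asserting the extension only for positive-diagonal $A$, or trivially in this degenerate case.

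\textbf{Two matrices.} Write $A=DCD$ and $B=EC'E$, with $C$ and $C'$ in the class. Then $A\circ B=(DE)(C\circ C')(DE)$, so $\per(A\circ B)=\det(D)^2\det(E)^2\per(C\circ C')$. It therefore suffices to show
\[
|\per(C\circ C')|^2\le\per(C\circ\overline C)\,\per(C'\circ\overline{C'}).
\]
Combined with the hypothesis, this gives $|\per(C\circ C')|\le\rho\,\per C\,\per C'$. Note that $\per C\ge0$ for $C\succeq0$, by Marcus's result or by the integral representation below.

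\textbf{Cauchy--Schwarz step.} This is the main step. I would use the Gram representation $c_{ij}=\langle u_i,u_j\rangle$ and $c'_{ij}=\langle w_i,w_j\rangle$, which gives $C\circ C'=\bigl(\langle u_i\otimes w_i,u_j\otimes w_j\rangle\bigr)$. The standard identity
\[
\per\bigl(\langle x_i,x_j\rangle\bigr)=\|x_1\vee\cdots\vee x_n\|^2,
\]
suitably normalized for the symmetric product, shows that $\per(C\circ C')=\|(u_1\otimes w_1)\vee\cdots\vee(u_n\otimes w_n)\|^2$.

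The difficulty is that $\per(C\circ C')$ is not an obvious inner product of the two vectors realizing $C\circ\overline C$ and $C'\circ\overline{C'}$. The plan is to write it as a pairing. Set
\[
X=(u_1\otimes\bar u_1)\vee\cdots\vee(u_n\otimes\bar u_n),
\]
built from the rows that realize $C\circ\overline C$, and define $Y$ in the same way from the $w_i$. The permanent $\per(C\circ C')$ should then arise as $\langle X',Y'\rangle$ for a re-pairing of tensor factors: group the $u$ factors with the $w$ factors via the identity $\langle u_i,u_j\rangle\langle w_i,w_j\rangle$. Applying Cauchy--Schwarz in the symmetric power then gives the displayed inequality.

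If that pairing proves awkward, I would instead use the $\C^n$-valued Gaussian integral formula
\[
\per\bigl(\langle x_i,x_j\rangle\bigr)=\E\prod_i|\langle x_i,g\rangle|^2,
\]
which writes $\per(C\circ C')$ as a single expectation of $\prod_i\langle u_i,g\rangle\overline{\langle u_i,g'\rangle}\cdots$, and then apply Cauchy--Schwarz in $L^2$. Verifying that these two quantities pair up exactly, with matching normalizations and complex conjugates in the right places, is where I expect the real work to be.
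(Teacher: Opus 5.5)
Your scaling step and your reduction to
$|\per(C\circ C')|^2\le\per(C\circ\overline C)\,\per(C'\circ\overline{C'})$ are correct. However, that inequality is the entire content of the two-matrix claim, and you leave it unproved. Neither of your suggested routes works as described. In a symmetric power, $\langle x_1\vee\cdots\vee x_n,\,y_1\vee\cdots\vee y_n\rangle$ is, up to a factor $1/n!$, the permanent of the cross-Gram matrix $(\langle x_i,y_j\rangle)$. Its $(i,j)$ entry depends on $i$ only through $x_i$ and on $j$ only through $y_j$. With your $X$ built from the $u_i\otimes\overline{u_i}$ and $Y$ from the $w_j\otimes\overline{w_j}$, $\langle X,Y\rangle$ gives $\per\bigl(|\langle u_i,w_j\rangle|^2\bigr)$ when the spaces agree. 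After any factorwise \emph{re-pairing} map, the $(i,j)$ entry still depends only on $u_i$ and $w_j$. By contrast, $c_{ij}c'_{ij}=\langle u_i,u_j\rangle\langle w_i,w_j\rangle$ also depends on $u_j$ and $w_i$, so it cannot be produced this way. Gaussian integration of products of linear forms yields permanents of cross-Gram matrices of exactly this kind, so your $L^2$ variant meets the same obstruction. Until you exhibit vectors with the right cross-Gram matrix, this step is a hope rather than an argument.

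The missing idea is much simpler and needs no Gram representation. The entries of $C\circ\overline C$ are $|c_{ij}|^2$, so literally $\per(C\circ\overline C)=\sum_{\sigma\in S_n}\prod_i|c_{i\sigma(i)}|^2$, and likewise for $C'$. Hence
$\per(C\circ C')=\sum_\sigma\bigl(\prod_i c_{i\sigma(i)}\bigr)\bigl(\prod_i c'_{i\sigma(i)}\bigr)$ is a pairing of the two vectors of permutation monomials in $\C^{n!}$. Cauchy--Schwarz over the $n!$ permutations gives the needed inequality immediately, for arbitrary complex matrices. This is the paper's argument, applied directly to $A$ and $B$, which also makes your second scaling step unnecessary. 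Positivity enters only to remove absolute values and take square roots: $\per A,\per B\ge0$, and $\per(A\circ B)\ge0$ because $A\circ B$ is a Gram matrix. You already noted these facts.
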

\begin{proof}
Positivity of each $2\times2$ principal submatrix gives
$|a_{ij}|^2\leq a_{ii}a_{jj}$. Thus a zero diagonal entry forces a zero
row and column. Its permanent, and the permanent of its entrywise product
with any matrix, vanish.
Otherwise put $D=\operatorname{diag}(\sqrt{a_{ii}})$ and $C=D^{-1}AD^{-1}$.
If $h=\prod_i a_{ii}$, direct expansion gives
$\per A=h\per C$ and
$\per(A\circ\overline A)=h^2\per(C\circ\overline C)$.
Rank is unchanged. Finally permutation-monomial Cauchy--Schwarz gives
\[
 |\per(A\circ B)|^2\leq
 \left(\sum_{\sigma}\prod_i|a_{i,\sigma(i)}|^2\right)
 \left(\sum_{\sigma}\prod_i|b_{i,\sigma(i)}|^2\right).
\]
These factors are the two self-conjugate permanents. Gaussian Gram
representations, proved below, show that permanents of positive
semidefinite matrices are nonnegative; the Schur product is itself a Gram
matrix. The claimed inequality follows.
\end{proof}

For $k\geq1$, set $b_k=(k!)^{1/k}$ and let $b(x)$ be the continuous
piecewise affine interpolation at consecutive positive integers.

\begin{lemma}\label{lem:rows}
If $N$ has entries in $[0,1]$ and row sums $s_i\geq1$, then
\begin{equation}\label{eq:rows}
 \per N\leq\prod_i b(s_i).
\end{equation}
The function $b$ is increasing on $[1,\infty)$ and is concave on $[1,13]$.
\end{lemma}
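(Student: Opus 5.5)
This is a real-valued extension of Bregman's theorem (Minc's conjecture). I will first reduce to matrices with $0/1$ entries. Then I will apply Bregman's bound, and finally handle the interpolation.

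\emph{Reduction to $0/1$ matrices.} Fix all rows except row $i$. The permanent is linear in row $i$: $\per N=\sum_j n_{ij}\per N_{ij}$, where $N_{ij}$ is the minor and is nonnegative. The right side of \eqref{eq:rows} depends on row $i$ only through $b(s_i)$. So I maximize $\sum_j n_{ij}c_j$, with $c_j\ge0$, over the polytope $\{x\in[0,1]^n:\sum_j x_j=s_i\}$.

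\begin{itemize}
\item If $s_i$ is an integer, the polytope's vertices are $0/1$ vectors with exactly $s_i$ ones. The linear maximum is attained at such a vertex, and $b(s_i)$ is unchanged.
\item If $k<s_i<k+1$, write $s_i=(1-t)k+t(k+1)$. I compare the linear function with the affine interpolation $b(s_i)=(1-t)b_k+tb_{k+1}$. Let $M(s)$ be the maximum of $\sum_j x_jc_j$ over the slice with sum $s$. Greedy filling shows $M$ is concave and piecewise affine in $s$, with breakpoints at the integers. Hence $M(s_i)=(1-t)M(k)+tM(k+1)$.
\end{itemize}

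The inequality to prove is multilinear-type in the row sums. I will therefore argue row by row with a strengthened induction hypothesis. Let $\Phi(N)=\per N/\prod_i b(s_i)$. Replacing row $i$ by an integer-sum $0/1$ row at $k$ or $k+1$ gives, for the original row,
\[
\per N\le(1-t)P_k+tP_{k+1},
\]
where $P_m$ is the permanent after substituting the optimal $0/1$ row of sum $m$. If $P_k\le B\,b_k$ and $P_{k+1}\le B\,b_{k+1}$ for the common factor $B=\prod_{l\ne i}b(s_l)$, this combination is at most $B\,b(s_i)$. Iterating over all rows reduces to $0/1$ matrices with integer row sums $r_i\ge1$. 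Rows with sum below $1$ do not arise, since $s_i\ge1$ and $k\ge1$.

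\emph{Base case.} For $0/1$ matrices, Bregman's theorem gives $\per N\le\prod_i(r_i!)^{1/r_i}=\prod_i b(r_i)$. I will cite it, or reprove it quickly by the entropy argument of Radhakrishnan.

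\emph{Properties of $b$.} Monotonicity is $(k!)^{1/k}\le((k+1)!)^{1/(k+1)}$. This is equivalent to $k!\le(k+1)^k$, which is clear. For an interpolant, concavity on $[1,13]$ means the successive slopes $b_{k+1}-b_k$ are nonincreasing for $k=1,\dots,12$. This is a finite check of $2b_{k+1}\ge b_k+b_{k+2}$ for $k=1,\dots,11$. I will verify these with certified interval or rational bounds on the $k$th roots: raise to common powers, or bracket each root by rationals. The check is mechanical but must be done rigorously, since the slopes differ only slightly; for example $b_1=1$, $b_2\approx1.414$, $b_3\approx1.817$. I expect this finite verification, plus stating the reduction so that it does not secretly require concavity, to be the main care points. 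The reduction uses only the affine interpolation on each unit interval, so concavity is needed only for later use in the paper.
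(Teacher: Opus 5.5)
Your proposal is correct and takes essentially the same route as the paper. Linearity of the permanent in each row reduces to binary rows: a row of sum $k+t$ is split, with weights $1-t,t$, into binary rows of sums $k$ and $k+1$, and Bregman's bound finishes. Your greedy value function $M(s)$ being affine on $[k,k+1]$ is the same fact as the paper's observation that vertices of the fixed-sum slice have at most one fractional coordinate. Monotonicity via $k!<(k+1)^k$ matches the paper, and your concavity check is the same set of eleven rigorously certified second-difference inequalities. The paper makes that check concrete by bracketing each $b_k$ through integer bisection with $L=2^{40}$ and verifying $2l_k>u_{k-1}+u_{k+1}$ strictly for $k=2,\ldots,12$.
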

\begin{proof}
Use Bregman's bound for a binary matrix with row counts $d_i$:
$\per N\leq\prod_i(d_i!)^{1/d_i}$; a zero row is treated separately.
An original proof is available in Schrijver \cite{schrijver}.
For the extension, fix each row sum. The resulting polytope has vertices
with at most one fractional coordinate: two such coordinates would admit
a nontrivial perturbation preserving the sum. Since the permanent is
affine in each row, a maximizing matrix can be chosen at row vertices.
A vertex of sum $k+t$ is the convex combination, with weights $1-t,t$,
of binary rows of sums $k,k+1$. Multilinearity over rows and the binary
bound give \eqref{eq:rows}. Integer row sums need no fractional coordinate.

Increasingness follows from $b_{k+1}>b_k$: every factor in $k!$ is
smaller than $k+1$. Concavity on the stated finite interval is checked
exactly as follows. With $L=2^{40}$, integer bisection obtains
\[
 l_k^k\leq k!L^k<u_k^k,\qquad u_k=l_k+1.
\]
For each $k=2,\ldots,12$ the supplied verifier checks
$2l_k>u_{k-1}+u_{k+1}$. These eleven strict integer inequalities imply
decreasing consecutive slopes. The roots and positive rational margins
are saved in the supplement. No concavity outside $[1,13]$ is used.
\end{proof}

Let $C$ be a correlation matrix, $P=\per C$, and
$Q=\per(C\circ\overline C)$. Its entries have modulus at most one. If
its nonzero eigenvalues, padded to four, are $\lambda_j$, then
\begin{equation}\label{eq:Qtrace}
 s_i:=\sum_j|C_{ij}|^2=(C^2)_{ii},\quad
 1\leq s_i\leq\lambda_1,\quad
 \sum_i s_i=\tr C^2=\sum_{j=1}^4\lambda_j^2.
\end{equation}
The upper bound on $s_i$ follows from $C^2\preceq\lambda_1 C$.
For $n\leq13$, arithmetic--geometric mean and Lemma \ref{lem:rows} give
\begin{equation}\label{eq:Qmean}
 Q\leq b(m)^n,\qquad m=\frac1n\sum_{j=1}^4\lambda_j^2.
\end{equation}
At arbitrary orders, monotonicity alone gives $Q\leq b(\lambda_1)^n$.

\section{Complex-sphere lower bounds for the permanent}
Let $C_{ij}=v_i\overline{v_j}^{\,T}$ with unit rows in $\C^r$. Put
$\ell_i(u)=v_i u$, $T_i=|\ell_i(u)|^2$, and $X=\prod_iT_i$.
Write $H_j=\sum_{a=1}^j1/a$, $H_0=0$, and
$(r)_n=r(r+1)\cdots(r+n-1)$.

\begin{lemma}\label{lem:sphere}
For uniform probability measure $\mu_r$ on the complex unit sphere,
\begin{equation}\label{eq:sphere}
 P=(r)_n\E_{\mu_r}X,\qquad
 \E_{\mu_r}\log T_i=-H_{r-1}.
\end{equation}
Consequently $P\geq(r)_n e^{-nH_{r-1}}>0$.
\end{lemma}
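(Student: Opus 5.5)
We prove the three assertions in turn: the integral formula for $P$ through Gaussian integration, the value of $\E\log T_i$ through the distribution of one coordinate on the sphere, and the lower bound through Jensen's inequality.

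\emph{Integral formula.} Let $g\in\C^r$ be a standard complex Gaussian vector with $\E g g^*=I_r$. Put $\ell_i(g)=v_i g$. Then $(\ell_1(g),\dots,\ell_n(g))$ is a centered circular Gaussian vector with covariance $\E\,\ell_i(g)\overline{\ell_j(g)}=v_i\overline{v_j}^{\,T}=C_{ij}$. The complex Wick (Isserlis) formula gives
\[
\E\prod_{i=1}^n\ell_i(g)\overline{\ell_i(g)}
=\sum_{\sigma\in S_n}\prod_i\E\,\ell_i(g)\overline{\ell_{\sigma(i)}(g)}
=\per C .
\]
Write $g=Ru$ with $R=|g|$ independent of $u\sim\mu_r$. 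The integrand is homogeneous of degree $2n$, so
\[
P=\E R^{2n}\cdot\E_{\mu_r}X .
\]
Here $R^2$ is $\Gamma(r,1)$-distributed, being a sum of $r$ independent unit exponentials, so $\E R^{2n}=\Gamma(r+n)/\Gamma(r)=(r)_n$. This proves the first identity in \eqref{eq:sphere}. The same argument applied to any Gram matrix shows that permanents of positive semidefinite matrices are nonnegative, which is the fact promised in Lemma \ref{lem:normalization}.

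\emph{Log moment.} By unitary invariance of $\mu_r$ and $|v_i|=1$, the variable $T_i$ has the law of $|u_1|^2$. Writing $|u_1|^2=E_1/(E_1+\dots+E_r)$ with independent unit exponentials $E_j$ shows $|u_1|^2\sim\mathrm{Beta}(1,r-1)$. Then
\[
\E\log T_i=\psi(1)-\psi(r)=-H_{r-1}.
\]
Alternatively, one can differentiate $\E T^s=\Gamma(1+s)\Gamma(r)/\Gamma(r+s)$ at $s=0$. For $r=1$ we have $T_i\equiv1$, which agrees with $H_0=0$.

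\emph{Lower bound.} Jensen's inequality for the concave logarithm, applied where $X>0$, gives
\[
\log\E X\ \ge\ \E\log X=\sum_i\E\log T_i=-nH_{r-1},
\]
hence $P\ge (r)_n e^{-nH_{r-1}}>0$. Jensen is legitimate here because each $\log T_i$ is integrable: the $\mathrm{Beta}$ density is bounded near $0$. In particular $X>0$ almost everywhere, since each zero set $\{v_iu=0\}$ is a proper hyperplane section of the sphere.

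The only step needing care is the Wick identity for complex Gaussians. The second moments $\E\,\ell_i\ell_j$ vanish, so only pairings of each $\ell_i$ with some $\overline{\ell_{\sigma(i)}}$ survive, and these pairings are indexed exactly by permutations $\sigma\in S_n$. I would state this identity with a short justification by expanding the moment generating function $\E\exp(\langle a,\ell\rangle+\langle b,\bar\ell\rangle)=\exp(a^TCb)$ and extracting the multilinear coefficient.
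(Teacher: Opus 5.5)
Your proposal is correct and follows the paper's proof essentially step by step. Both arguments use the Gaussian identity $\E|\prod_i\ell_i(g)|^2=\per C$, radial separation with $\E R^{2n}=(r)_n$, the $\mathrm{Beta}(1,r-1)$ law of $T_i$ for the log mean, and Jensen applied to $X$ without assuming independence. The only differences are cosmetic: you obtain $-H_{r-1}$ from digamma values rather than by differentiating the Beta-function identity, and you justify the Wick formula through the moment generating function rather than through the monomial orthogonality $\E g^\alpha\overline g^{\,\beta}=\delta_{\alpha\beta}\alpha!$.
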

\begin{proof}
For a standard complex Gaussian vector $g$, monomials satisfy
$\E g^\alpha\overline g^{\,\beta}=\delta_{\alpha\beta}\alpha!$.
Expansion of $\prod_i\ell_i(g)$ and its conjugate yields
$\E|\prod_i\ell_i(g)|^2=\per C$. Equivalently, Gaussian contractions
pair each of the $n$ factors with one conjugate factor, summing over
permutations. Separate $g=Ru$; then $R$ and $u$ are independent,
$u$ is uniform, and $\E R^{2n}=(r)_n$. A unit projection $T_i$ has
density $(r-1)(1-t)^{r-2}$ when $r>1$.
Its log integral equals $-H_{r-1}$. For completeness, repeated
integration by parts gives
$B(a,b)=(b-1)!/[a(a+1)\cdots(a+b-1)]$ for $a>0$ and integer $b>0$;
differentiation in $a$ gives the log integral. Local domination by
$t^{a-1}|\log t|$ justifies differentiation. For $r=1$, $T_i=1$.
Jensen applied to $X$ proves the lower bound; no independence of the
$T_i$ is assumed.
\end{proof}

Let $S=\sum_i v_i^*v_i$ and $\lambda_1\geq\cdots\geq\lambda_r\geq0$
be its eigenvalues. Gram and frame operators have the same nonzero
eigenvalues, and $\tr S=n$.

\begin{proposition}\label{prop:subspace}
For $1\leq k\leq r$,
\begin{equation}\label{eq:subspace}
 P\geq(r)_n\frac{k}{r}
 \exp\left(\frac{\lambda_1+\cdots+\lambda_k}{k}
              -(n-1)H_r-H_k\right).
\end{equation}
\end{proposition}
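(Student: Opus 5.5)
The plan is to start from the sphere representation $P=(r)_n\E_{\mu_r}X$ of Lemma~\ref{lem:sphere}. Instead of applying Jensen directly under $\mu_r$, I would first change measure to a distribution tilted toward the top spectral subspace. Let $W$ be the span of eigenvectors of $S$ for $\lambda_1,\dots,\lambda_k$, with orthonormal basis $w_1,\dots,w_k$ and orthogonal projection $\Pi$. Define $\nu$ by
\[
\frac{d\nu}{d\mu_r}(u)=\frac rk\,|\Pi u|^2=\frac1k\sum_{j=1}^k r\,|w_j^*u|^2 .
\]
This is a probability density because $\E_{\mu_r}|\Pi u|^2=k/r$. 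Writing $\E_{\mu_r}X=\frac kr\E_\nu\bigl[X/|\Pi u|^2\bigr]$ and applying Jensen to the logarithm gives
\[
P\geq(r)_n\frac kr\exp\Bigl(\sum_i\E_\nu\log T_i-\E_\nu\log|\Pi u|^2\Bigr).
\]
This already produces the factor $k/r$ in \eqref{eq:subspace}. It remains to evaluate or bound the two kinds of log-expectations.

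The second term is exact. Under $\mu_r$, the quantity $|\Pi u|^2$ is $\mathrm{Beta}(k,r-k)$. The tilt by $t$ makes it $\mathrm{Beta}(k+1,r-k)$, so $\E_\nu\log|\Pi u|^2=\psi(k+1)-\psi(r+1)=H_k-H_r$. This is the same Beta-function differentiation already used in Lemma~\ref{lem:sphere}. The degenerate case $k=r$ gives $0$ and is consistent.

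For the first term I would write $\nu$ as the average of the measures $\nu_j$ with density $r|w_j^*u|^2$. For a unit row $v$, I would then aim to prove the per-direction estimate
\[
\E_{\nu_j}\log|vu|^2\ \geq\ -H_r+|vw_j|^2 .
\]
Averaging over $j$ gives $\E_\nu\log T_i\geq -H_r+\|v_i\Pi\|^2/k$. Summing over $i$ and using $\sum_i\|v_i\Pi\|^2=\tr(S\Pi)=\lambda_1+\cdots+\lambda_k$, the exponent becomes
\[
\frac{\lambda_1+\cdots+\lambda_k}{k}-nH_r-(H_k-H_r),
\]
which is exactly \eqref{eq:subspace}.

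The main obstacle is this per-direction inequality. I would pass to Gaussians, as in Lemma~\ref{lem:sphere}. With $g=Ru$, the radial contribution is $\E R^2\log R^2/\E R^2=\psi(r+1)$, so the claim reduces to the two-dimensional statement
\[
\E\bigl[|z|^2\log|cz+sh|^2\bigr]\geq\psi(1)+|c|^2,\qquad |c|^2+s^2=1,
\]
with $z,h$ independent standard complex Gaussians. The endpoints hold with equality: at $c=0$ the left side is $\psi(1)$, and at $|c|=1$ it is $\psi(2)=1+\psi(1)$. I would therefore try to show that the left side, as a function of $x=|c|^2$, is concave on $[0,1]$. The chord inequality then gives the bound. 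To do this I would condition on $z$ and use the explicit formula $\E\log|a+sh|^2=\log s^2+\log\bigl(1+|a|^2/s^2\bigr)+E_1(|a|^2/s^2)$. Alternatively I would differentiate twice in $x$ under the integral. If concavity fails, the fallback is a direct rational certificate on $[0,1]$ for this single-variable inequality.
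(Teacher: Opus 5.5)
\textbf{Assessment.} Your architecture is the paper's. The tilt $d\nu=(r/k)|\Pi u|^2\,d\mu_r$ written as an average of directional tilts, the $\mathrm{Beta}(k+1,r-k)$ evaluation $\E_\nu\log|\Pi u|^2=H_k-H_r$, Jensen, and $\sum_i\|v_i\Pi\|^2=\lambda_1+\cdots+\lambda_k$ all match. Your Gaussian reduction of the directional step to $\E[|z|^2\log|cz+sh|^2]\geq\psi(1)+|c|^2$ is also correct.

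\textbf{The gap.} That directional step is the only non-bookkeeping content of the proposition, and you leave it as an aim. The missing idea is that it is an exact identity. You get it by conditioning on the variable inside the logarithm, not on the weight $z$. Put $y=cz+sh$ and $h'=sz-\bar c h$. Then $h'$ is a standard complex Gaussian uncorrelated with, hence independent of, $y$, and $z=\bar c y+sh'$. Therefore
\[
\begin{aligned}
\E\bigl[|z|^2\log|y|^2\bigr]&=|c|^2\,\E\bigl[|y|^2\log|y|^2\bigr]+s^2\,\E\log|y|^2\\
&=|c|^2\psi(2)+s^2\psi(1)=\psi(1)+|c|^2,
\end{aligned}
\]
with the cross term vanishing by independence. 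This is the Gaussian form of the paper's computation. There, conditioning on $T_i=t$ gives $\E(T_w\mid T_i=t)=\gamma t+(1-\gamma)(1-t)/(r-1)$, and two Beta log integrals give $\E_{\nu_w}\log T_i=\gamma-H_r$ exactly.

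\textbf{Why your proposed routes are misdirected.}
\begin{itemize}
\item The left side is affine in $x=|c|^2$. So ``concavity'' holds only in the degenerate sense of a vanishing second derivative.
\item Your fallback cannot work at all. A rational certificate needs a positive margin, but here the inequality is an equality at every $x\in[0,1]$.
\item The conditioning-on-$z$ formula you quote is wrong. For $a\neq0$ one has $\E\log|a+sh|^2=\log|a|^2+E_1(|a|^2/s^2)$. Your version $\log s^2+\log(1+|a|^2/s^2)+E_1(|a|^2/s^2)$ diverges as $a\to0$.
\end{itemize}

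\textbf{Completing the argument.} With the identity in hand, the rest of your argument goes through. You also need to note, as the paper does, that $\log T_i$ and $\log|\Pi u|^2$ are integrable under the bounded tilted density, and that $\{\Pi u=0\}$ is $\mu_r$-null, so the change of measure is legitimate.
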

\begin{proof}
For a unit direction $w$, put $T_w=|w^*u|^2$ and $d\nu_w=rT_wd\mu_r$.
If $\gamma=|v_iw|^2$, conditioning on $T_i=t$ gives
\[
 \E(T_w\mid T_i=t)=\gamma t+(1-\gamma)\frac{1-t}{r-1}.
\]
The orthogonal-complement phases have zero mean. Beta log integrals then
give $\E_{\nu_w}\log T_i=\gamma-H_r$.
For an orthogonal projection $\Pi_E$ onto a $k$-dimensional subspace,
put $Y=u^*\Pi_Eu$ and $d\nu_E=(r/k)Yd\mu_r$. This is the average of
the $k$ directional measures, so
\[
 \E_{\nu_E}\log T_i=\frac{\|v_i\Pi_E\|^2}{k}-H_r.
\]
For $k<r$, $Y$ has the Beta$(k,r-k)$ law and after tilting the
Beta$(k+1,r-k)$ law, whence $\E_{\nu_E}\log Y=H_k-H_r$.
For $k=r$, $Y=1$, and this conclusion holds directly.
The change of measure gives
$P=(r)_n(k/r)\E_{\nu_E}(X/Y)$.
All the log terms are integrable: projection log singularities are
integrable and the new density is bounded. The zero set of $Y$ has
measure zero; multiplying back its density cancels the denominator.
Jensen now gives \eqref{eq:subspace} when $E$ is the top spectral
subspace, since $\sum_i\|v_i\Pi_E\|^2=\lambda_1+\cdots+\lambda_k$.
For $r=1$ the assertion follows directly from Lemma \ref{lem:sphere}.
\end{proof}

In particular, for any embedding in $\C^4$, writing $D_n=(n+3)!/6$,
\begin{equation}\label{eq:twobounds}
 P\geq D_n\max\left(e^{-11n/6},
              \frac14e^{\lambda_1-1-25(n-1)/12}\right).
\end{equation}
Zero coordinates in an embedding cause no difficulty.

\begin{proposition}\label{prop:projection}
Project the Gram rows onto a top $k$-dimensional frame subspace and let
$s=\lambda_{k+1}+\cdots+\lambda_r$. For $k\geq2$,
\begin{equation}\label{eq:projection}
 P\geq\pos{1-s}(k)_n
       \max\left(e^{-nH_{k-1}},
              k^{-1}e^{\lambda_1-1-(n-1)H_k}\right).
\end{equation}
The one-dimensional projection gives
$P\geq n!\pos{\lambda_1-n+1}$.
\end{proposition}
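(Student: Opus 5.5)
The plan is to compare $C$ with the Gram matrix of the projected rows, and then apply the earlier sphere bounds in the smaller dimension $k$. Let $E$ be a top $k$-dimensional spectral subspace of $S$, with orthogonal projection $\Pi=\Pi_E$. Write $w_i=v_i\Pi$ and $z_i=v_i(I-\Pi)$, so that $C=C_E+C_\perp$, where $C_E=(w_i\overline{w_j}^{\,T})$ and $C_\perp=(z_i\overline{z_j}^{\,T})$.

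\textbf{Step 1: $P\geq\per C_E$.} I will use the Gaussian representation from the proof of Lemma \ref{lem:sphere}. Take independent standard complex Gaussians $g$ on $E$ and $g'$ on $E^\perp$. Then
\[
 P=\E\Bigl|\prod_i\bigl(w_ig+z_ig'\bigr)\Bigr|^2 .
\]
Condition on $g$ and expand the product as a polynomial in $g'$. Its constant term is $\prod_i w_ig$. Every other monomial in $g'$ has mean zero against constants, because $\E g'^\alpha\overline{g'}^{\,\beta}=\delta_{\alpha\beta}\alpha!$. Hence the conditional expectation is at least $|\prod_i w_ig|^2$. Taking the expectation over $g$ then gives $P\geq\per C_E$.

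\textbf{Step 2: normalization.} Put $e_i=\|z_i\|^2\in[0,1]$, so that $\|w_i\|^2=1-e_i$ and $\sum_ie_i=\tr\bigl((I-\Pi)S\bigr)=s$. If some $w_i=0$, then $s\geq1$ and the bound is trivial. Otherwise factor out the row norms as in Lemma \ref{lem:normalization}:
\[
 \per C_E=\prod_i(1-e_i)\,\per\widetilde C .
\]
Here $\widetilde C$ is the Gram matrix of the unit rows $\widetilde w_i=w_i/\|w_i\|\in E\cong\C^k$. The Weierstrass product inequality gives $\prod_i(1-e_i)\geq1-\sum_ie_i=1-s$, which produces the factor $\pos{1-s}$.

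\textbf{Step 3: bounding $\per\widetilde C$.} Lemma \ref{lem:sphere} with $r=k$ gives $\per\widetilde C\geq(k)_ne^{-nH_{k-1}}$. For the second term, apply Proposition \ref{prop:subspace} in dimension $k$ with subspace dimension $1$. Since $H_1=1$, this gives
\[
 \per\widetilde C\geq(k)_n\,k^{-1}e^{\widetilde\lambda_1-1-(n-1)H_k},
\]
where $\widetilde\lambda_1$ is the top eigenvalue of $\widetilde S=\sum_i\widetilde w_i^*\widetilde w_i$. Because $\|w_i\|\leq1$, we have $\widetilde S\succeq\sum_iw_i^*w_i=\Pi S\Pi$. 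The latter has top eigenvalue $\lambda_1$, since $E$ contains the top eigenvector, so $\widetilde\lambda_1\geq\lambda_1$. Taking the maximum of the two bounds yields \eqref{eq:projection}.

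\textbf{Case $k=1$.} Here $\widetilde C$ is the all-ones matrix of phases times conjugates, a rank-one unimodular Gram matrix, so $\per\widetilde C=n!$. Also $s=\tr S-\lambda_1=n-\lambda_1$. Steps 1 and 2 then give $P\geq n!\,\pos{1-n+\lambda_1}$.

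\textbf{Main obstacle.} I expect Step 1 to be the only step needing care. The orthogonality argument must be done conditionally on $g$ and must handle mixed monomials in $g'$ and $\overline{g'}$. Pairing each monomial with the constant term via the Gaussian moment identity should settle it. The comparison $\widetilde\lambda_1\geq\lambda_1$ in Step 3 is routine.
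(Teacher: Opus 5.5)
Your proposal is correct and follows the paper's proof essentially step by step. Your Step 1 conditions on $g$ and uses that nonconstant holomorphic monomials in $g'$ have mean zero, which is the paper's orthogonality of terms of different complementary degree. The remaining steps match the paper as well: the bound $\prod_i(1-e_i)\geq1-s$, normalization of the projected rows with $\widetilde S\succeq\Pi S\Pi$ giving $\widetilde\lambda_1\geq\lambda_1$, Lemma~\ref{lem:sphere} together with the one-dimensional tilt of Proposition~\ref{prop:subspace} inside $\C^k$, and the rank-one permanent $n!$ for $k=1$.
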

\begin{proof}
Decompose the Gaussian coordinates into the projected subspace and its
orthogonal complement. In the homogeneous product of linear forms, terms
of different total complementary degree are orthogonal. The degree-zero
term therefore has squared Gaussian norm no larger than the original
norm. Hence $P$ dominates the permanent of the projected Gram matrix.
If $s<1$, write the projected squared row norms as $\delta_i=1-p_i$,
where $p_i\geq0$ and $\sum_i p_i=s$. Then all $\delta_i>0$ and
$\prod_i\delta_i\geq1-s$, by induction on the product.
Normalize the projected rows. Their frame operator dominates the
unnormalized projected frame operator, because each is divided by
$\delta_i\leq1$. Its top eigenvalue is at least $\lambda_1$.
Apply Lemma \ref{lem:sphere} and Proposition \ref{prop:subspace} with
$k=1$ as the tilt dimension inside this $k$-dimensional ambient space,
then restore the diagonal factor $\prod_i\delta_i$.
If $s\geq1$, the displayed bound is zero and no normalization is needed.
In dimension one the projected permanent is $n!\prod_i\delta_i$,
and $s=n-\lambda_1$ gives the final claim.
\end{proof}

\section{Proof of the rank-four theorem for all orders at least ten}
\subsection{Order ten: complete spectral coordinates}
Write the four eigenvalues as $\lambda\geq\eta\geq\theta\geq\zeta\geq0$.
For order ten, the entire ordered simplex is parametrized by
$(x,y,z)\in[0,1]^3$:
\begin{align}\label{eq:tencoords}
 \zeta&=\tfrac52z,&
 \theta&=\zeta+\tfrac{10}{3}(1-z)y,\\
 \eta&=\theta+5(1-z)(1-y)x,&
 \lambda&=10-\zeta-\theta-\eta.\notag
\end{align}
The consecutive differences are nonnegative; the top difference is
$10(1-z)(1-y)(1-x)$. Conversely the lowest eigenvalue determines $z$,
then $\theta-\zeta$ determines $y$, and $\eta-\theta$ determines $x$.
A zero denominator makes a coordinate redundant rather than excluding a
spectrum.

Let $D=13!/6$, $u=\pos{1-\zeta}$, and $v=\pos{1-\theta-\zeta}$.
The propositions above give $P\geq F$, where $F$ is the maximum of
the following nine nonnegative expressions:
\begin{equation}\label{eq:ninebounds}
\begin{gathered}
 D e^{-55/3},\quad \tfrac D4e^{\lambda-79/4},\\
 \tfrac{12!}{2}u e^{-15},\quad
 \tfrac{12!}{6}u e^{\lambda-35/2},\\
 11!v e^{-10},\quad \tfrac{11!}{2}v e^{\lambda-29/2},\quad
 10!\pos{\lambda-9},\\
 \tfrac D2e^{(\lambda+\eta)/2-81/4},\quad
 \tfrac{3D}{4}e^{(\lambda+\eta+\theta)/3-247/12}.
\end{gathered}
\end{equation}
Each expression decreases in every relevant cube coordinate. Indeed
$\lambda$ decreases in $x,y,z$, $\theta+\zeta$ increases in $y,z$,
$\zeta$ increases in $z$, and the top spectral sums satisfy
$\lambda+\eta=10-\theta-\zeta$ and
$\lambda+\eta+\theta=10-\zeta$.

The mean squared spectrum
$m=(\lambda^2+\eta^2+\theta^2+\zeta^2)/10$ also decreases in each
coordinate. For $x$, put $a=10(1-z)(1-y)$; then
$\partial m/\partial x=-a^2(1-x)/10$.
For $y$, the top three eigenvalues move affinely toward their equal mean
at $y=1$, so their squared deviations are multiplied by $(1-y)^2$.
For $z$, all four move toward $5/2$, and the squared deviations are
multiplied by $(1-z)^2$. These descriptions include all endpoints.
Thus on any closed box with lower corner $l$ and upper corner $h$,
\begin{equation}\label{eq:tenbox}
 P\geq F(h),\qquad Q\leq b(m(l))^{10}.
\end{equation}
This is a bound for every point of a box, not a test of finitely many
spectra.

\subsection{Exact certificate for order ten}
For nonnegative rational $a<66$, define
\begin{equation}\label{eq:exp64}
 E^-_{64}(a)=\sum_{j=0}^{64}\frac{a^j}{j!},\qquad
 E^+_{64}(a)=E^-_{64}(a)+\frac{a^{65}/65!}{1-a/66}.
\end{equation}
Then $E^-_{64}(a)\leq e^a\leq E^+_{64}(a)$, since every subsequent
term ratio is at most $a/66$. In \eqref{eq:ninebounds} every exponential
is written as the reciprocal of an exponential of a nonnegative rational
argument. Integer factorial-root brackets and \eqref{eq:exp64} therefore
give rational lower bounds $\underline P$ and upper bounds $\overline Q$.

The supplied order-ten certificate has 1907 accepted closed boxes,
3813 total tree nodes, maximum depth 18, and no unresolved leaves.
Each box satisfies
\begin{equation}\label{eq:tencert}
 \overline Q/\underline P^2\leq\rho_4<1.
\end{equation}
The checker recomputes \eqref{eq:ninebounds} directly and recovers each
dyadic box's path in a tree which bisects $x,y,z$ cyclically. It checks
1907 leaves and 1906 internal nodes, with exactly two children at every
internal node and no leaf that is an ancestor of another leaf. Closed
children have union equal to their closed parent. Hence the tree covers
the entire cube, including all boundaries. Rational ceilings to the
$10^{-12}$ grid give the displayed $\rho_4$.

\subsection{Order eleven: a two-dimensional closed cover}
For order eleven, retain the lowest eigenvalue $\zeta$ and write
\begin{equation}\label{eq:elevencoords}
 0\leq\zeta\leq\tfrac{11}{4},\quad 0\leq t\leq1,\qquad
 \lambda=\frac{11-\zeta+t(22-8\zeta)}3.
\end{equation}
For fixed $\lambda,\zeta$, the largest possible squared spectral sum is
obtained at
\[
 \eta_* =\min(\lambda,11-\lambda-2\zeta),\qquad
 \theta_*=11-\lambda-\zeta-\eta_*,\qquad
 m_*=(\lambda^2+\eta_*^2+\theta_*^2+\zeta^2)/11.
\]
The rectangle covers every ordered spectrum. At $\zeta=11/4$ its
$t$ coordinate is redundant. Convexity in $\eta$ with fixed
$\eta+\theta$ proves the squared-sum maximization.

On the branch $\lambda+\zeta\leq11/2$,
$11m_*=2\lambda^2+(11-2\lambda-\zeta)^2+\zeta^2$; its partial
derivatives in $\lambda,\zeta$ are $12\lambda+4\zeta-44\geq0$ and
$4\lambda+4\zeta-22\leq0$.
On the other branch they are $4\lambda+4\zeta-22\geq0$ and
$4\lambda+12\zeta-44\leq0$. The expressions agree at the branch
boundary. Since \eqref{eq:elevencoords} increases in $t$ and decreases
in $\zeta$, $m_*$ increases in $t$ and decreases in $\zeta$.

Put $D_4=14!/6$ and $D_3=13!/2$. On a box
$[z_l,z_u]\times[t_l,t_u]$, set
$\lambda_l=\lambda(z_u,t_l)$ and $\lambda_u=\lambda(z_l,t_u)$.
A valid pair of enclosures is
\begin{align}\label{eq:elevenbox}
 \underline P&=\max\left\{
 \frac{D_4}{E^+_{64}(121/6)},\quad
 \frac{D_4/4}{E^+_{64}(131/6-\lambda_l)},\quad
 \frac{D_3\pos{1-z_u}}{E^+_{64}(33/2)}\right\},\\
 \overline Q&=\overline b(m_*(\lambda_u,z_l))^{11}.\notag
\end{align}
The last lower bound follows from projecting out the lowest direction.
The supplied certificate consists of 202 closed rectangles, with bound
$\rho_{11}=249526383231/250000000000<\rho_4$.
The checker recomputes every enclosure and verifies coverage in all
45 open vertical strips and all 46 vertical boundary lines. In each strip
the $t$ intervals have exactly matching adjacent endpoints and cover
$[0,1]$; on every boundary their closed union covers $[0,1]$.
This proves coverage of the full rectangle, not just its area.

\subsection{Orders twelve through twenty-six}
Here the top eigenvalue satisfies $n/4\leq\lambda\leq n$ and
$Q\leq b(\lambda)^n$. On a closed interval $[l,u]$, use
\begin{equation}\label{eq:finiteinterval}
 \underline P=\max\left\{
 \frac{D_n}{E^+_{96}(11n/6)},\quad
 \frac{D_n/4}{E^+_{96}(1+25(n-1)/12-l)}\right\},\qquad
 \overline Q=\overline b(u)^n.
\end{equation}
In this formula $E^+_{96}(a)$ uses the same construction as
\eqref{eq:exp64}, with Taylor terms through 96 and tail
$(a^{97}/97!)/(1-a/98)$. All arguments lie in $[0,98)$.
The supplement contains 186 closed intervals for the 15 orders.
For each order the first and last endpoints equal $n/4$ and $n$, and
every adjacent endpoint agrees exactly. Every interval satisfies
$\overline Q/\underline P^2<\rho_4$; the largest value is approximately
$0.998523335$. The check uses the saved interval endpoints only and
recomputes all rational inequalities. It does not reuse historical
floating-point or logarithmic verdicts.

\subsection{An analytic tail for every order at least twenty-seven}
Arithmetic--geometric mean gives $b_k\leq(k+1)/2$, hence by interpolation
$b(x)\leq(x+1)/2$ for all $x\geq1$. Thus
\[
 Q\leq((\lambda+1)/2)^n.
\]
The two lower bounds in \eqref{eq:twobounds} intersect at
$\lambda_0=n/4+\delta$, where $\delta=\log4-13/12\in(0,1)$.
For example the positive atanh series gives $\log2>2/3$, and the
first five terms of $e^{7/10}$ give $\log2<7/10$.
The ratio using the first branch increases in $\lambda$.
For the second branch its logarithmic derivative is
$n/(\lambda+1)-2$, with maximum at $\widehat\lambda=n/2-1$.
For $n\geq27$ this point lies above $\lambda_0$ and in $[n/4,n]$.
Consequently the maximum over the complete interval is bounded by
\begin{equation}\label{eq:tail}
 Q/P^2\leq T_n:=
 \frac{16(n/4)^n e^{(19n-1)/6}}{D_n^2}.
\end{equation}
The factorial series gives
$e<8/3+(1/24)/(1-1/5)=87/32=:e_+$.
Clear fractional exponents by setting
\[
 U_n=\frac{16^6(n/4)^{6n}e_+^{19n-1}}{D_n^{12}},
 \qquad T_n^6<U_n.
\]
The verifier checks the exact rational inequalities
\begin{equation}\label{eq:tailcert}
 U_{27}<\rho_4^6,\qquad
 \frac{e_+^{25}}{4^6\,28^6}<1.
\end{equation}
Since $(1+1/n)^n<e<e_+$ and $D_{n+1}/D_n=n+4$,
\[
 \frac{U_{n+1}}{U_n}
 <\frac{e_+^{25}(n+1)^6}{4^6(n+4)^{12}}
 \leq\frac{e_+^{25}}{4^6(n+1)^6}
 \leq\frac{e_+^{25}}{4^6\,28^6}<1.
\]
This proves the tail for infinitely many orders; it is not a finite
test of orders near 27. Combining the four order ranges and Lemma
\ref{lem:normalization} proves Theorem \ref{thm:large}.

\section{Quartic densities for order-nine tight frames}
The spectral bounds leave a tight-frame case at order nine. We now keep
information about the Gram rows that is absent from their frame spectrum.
Let $\HH=\operatorname{Sym}^2(\C^4)$, viewed as symmetric complex
$4\times4$ matrices with Frobenius inner product
$\langle H,K\rangle=\tr(H^*K)$. Its dimension is ten.
For a symmetric matrix $H$, put $p_H(z)=z^THz$. Define
\[
 a_i=v_i^Tv_i\in\HH,\quad
 M=\sum_{i=1}^9|a_i\rangle\langle a_i|,\quad
 \tr M=9.
\]
The notation $|H\rangle\langle H|$ means the operator
$K\mapsto H\tr(H^*K)$. Thus
$\langle H,MH\rangle=\sum_i|p_H(\overline v_i^{\,T})|^2$.

\subsection{The exact log-mean identity}
\begin{lemma}\label{lem:quartic}
For a positive semidefinite operator $W$ on $\HH$ with $\tr W=1$,
choose a Frobenius-orthonormal spectral decomposition
$W=\sum_jw_j|H_j\rangle\langle H_j|$ and define
\begin{equation}\label{eq:density}
 f_W(u)=10\sum_jw_j|p_{H_j}(u)|^2,\quad
 J(W)=\E_{\mu_4} f_W\log f_W,
\end{equation}
where $0\log0=0$. Then $\E f_W=1$ and, for a tight frame,
\begin{equation}\label{eq:quarticgain}
 P\geq D_9\exp\left(-\frac{33}{2}+G(W)\right),\quad
 G(W)=\frac9{20}-\frac12\tr(MW)-J(W),\quad D_9=12!/6.
\end{equation}
\end{lemma}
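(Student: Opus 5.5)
The plan is a change of measure, as in Proposition~\ref{prop:subspace}, but with the quartic density $f_W$ replacing the quadratic tilt. First I note that $(4)_9=4\cdot5\cdots12=12!/6=D_9$, so Lemma~\ref{lem:sphere} gives $P=D_9\,\E_{\mu_4}X$. I would then verify $\E f_W=1$.

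\emph{Normalization.} For a complex Gaussian $g$ in $\C^4$ and a symmetric $H$, Wick pairing gives $\E|g^THg|^2=2\tr(H^*H)$: the two pairings of $g^Tg$ against $\overline g^{\,T}\overline g$ agree by symmetry of $H$. Dividing by $\E R^4=(4)_2=20$ gives $\E_{\mu_4}|p_H(u)|^2=\|H\|^2/10$. With $\tr W=\sum_j w_j=1$ and Frobenius-orthonormal $H_j$, this yields $\E f_W=1$, so $d\nu=f_W\,d\mu_4$ is a probability measure.

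\emph{Jensen step.} On $\{f_W>0\}$ write $X=f_W\cdot(X/f_W)$. The set $\{f_W=0\}$ is either null, or it is handled by the convention $0\log0=0$ together with $X\ge0$. This gives
\[
P\geq D_9\E_\nu(X/f_W)\geq D_9\exp\Bigl(\sum_{i=1}^9\E_\nu\log T_i-J(W)\Bigr).
\]
Integrability holds as in Proposition~\ref{prop:subspace}: $f_W$ is a bounded polynomial and $\log T_i$ has only an integrable singularity. It therefore remains to show
\[
\sum_i\E_\nu\log T_i=-\tfrac{33}{2}+\tfrac{9}{20}-\tfrac12\tr(MW).
\]
By linearity in $W$, it suffices to do this for a single unit $H$ with density $10|p_H|^2$ and then sum with weights $w_j$.

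\emph{Single-row computation.} Fix $i$ and rotate unitarily so that $\overline v_i^{\,T}$ becomes $e_1$. This maps $H$ to $U^THU$, which preserves symmetry and Frobenius norm, and $T_i=|u_1|^2$. Write $u=(u_1,u')$ and
\[
p_H(u)=h_{11}u_1^2+2u_1\,h'^Tu'+u'^TH'u'.
\]
After squaring, the cross terms have nonzero net phase in $u_1$ or in $u'$, so they vanish under averaging. Three terms remain:
\begin{itemize}
\item $|h_{11}|^2|u_1|^4$;
\item $4|u_1|^2|h'^Tu'|^2$;
\item $|u'^TH'u'|^2$.
\end{itemize}
Conditioning on $T=|u_1|^2$, which has density $3(1-t)^2$, the $u'$-moments give factors $(1-t)\|h'\|^2/3$ and $(1-t)^2\|H'\|^2/6$, the latter from the order-three analogue of the normalization above. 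Then $\E[T^a(1-T)^b\log T]$ follows from the Beta log-integrals already used in Lemma~\ref{lem:sphere}. The result is
\[
\E_\nu\log T_i=c_0\|H\|^2+c_1\|H\overline v_i^{\,T}\|^2+c_2|p_H(\overline v_i^{\,T})|^2
\]
with explicit rationals $c_0,c_1,c_2$. This uses $\|h'\|^2=\|H\overline v_i^{\,T}\|^2-|h_{11}|^2$ and $\|H'\|^2=\|H\|^2-2\|h'\|^2-|h_{11}|^2$.

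\emph{Summation over rows.} Here the tight-frame hypothesis \eqref{eq:tight} enters. It gives $\sum_i\|H\overline v_i^{\,T}\|^2=\tr(H^*H\,\overline S)=\tfrac94\|H\|^2$. Also $\sum_i|p_H(\overline v_i^{\,T})|^2=\langle H,MH\rangle$ by the identity stated before the lemma. Summing with weights $w_j$ turns $\sum_jw_j\langle H_j,MH_j\rangle$ into $\tr(MW)$. The constant part becomes $9c_0+\tfrac94c_1$, which must equal $-\tfrac{33}{2}+\tfrac9{20}$, and $c_2$ must equal $-\tfrac12$.

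\emph{Main obstacle.} The hard step is the bookkeeping in the single-row Beta integrals: weighting correctly with the conditional $u'$-moments and getting $c_0,c_1,c_2$ exactly. I would check the constants in two ways. First, the case $M$ replaced by its average $\tfrac{9}{10}I$ must reproduce a bound of the same form as Proposition~\ref{prop:subspace}. Second, the case $W=I/10$ makes $f_W\equiv1$ and must give $G=0$ up to the stated constants.
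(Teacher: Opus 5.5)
Your proposal is correct and follows essentially the paper's route. The paper also passes to $f_W\,d\mu_4$ with Jensen, splits $p_H$ by degree in the row coordinate so that cross terms vanish under phase averaging, applies Beta log-integrals, and uses tightness to sum over rows; it phrases your $h_{11},h',H'$ split through Fischer norms $\beta_l$ and $\|D_{\overline v}p_H\|^2=\sum_l l\beta_l$, whereas you work with $\|H\overline v_i^{\,T}\|^2$ directly.

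The bookkeeping you left open does close. The three Beta integrals give $\E_\nu\log T_i=2\|H\overline v_i^{\,T}\|^2-\tfrac12|p_H(\overline v_i^{\,T})|^2-H_5\|H\|^2$. Hence $c_0=-H_5=-\tfrac{137}{60}$, $c_1=2$, $c_2=-\tfrac12$, and $9c_0+\tfrac94c_1=-\tfrac{321}{20}=-\tfrac{33}{2}+\tfrac{9}{20}$, exactly as required.
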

\begin{proof}
The Gaussian squared norm of $p_H$ is
$g=\|p_H\|_\F^2=2\|H\|_F^2$. Radial separation in degree two gives
$\E_{\mu_4}|p_H(u)|^2=g/(4)_2=g/20$, proving normalization.
For a unit row $v$, decompose $p_H$ by degree $l=0,1,2$ in its
coordinate $\ell_v$; write the three orthogonal Fischer squared norms
as $\beta_l$, summing to $g$. Tilt by the normalized squared polynomial.
For each block the squared projection has the
Beta$(l+1,5-l)$ distribution, so its log mean is $H_l-H_5$.
Cross terms vanish under the corresponding phase integration, including
when multiplied by $\log|\ell_v|^2$. Therefore
\[
 \E_{f_H\mu_4}\log|\ell_v|^2
 =\frac{\sum_l\beta_lH_l}{g}-H_5
 =\frac{\|D_{\overline v}p_H\|_\F^2-|p_H(\overline v^{\,T})|^2}{g}-H_5.
\]
Here $D_{\overline v}$ is the directional derivative: the occupation
number identity gives $\|D_{\overline v}p_H\|^2=\sum_l l\beta_l$,
and $\beta_2=2|p_H(\overline v^{\,T})|^2$.
Tightness and homogeneity imply
\[
 \sum_i\|D_{\overline v_i}p_H\|_\F^2
 =\frac94\sum_{a=1}^4\|\partial_a p_H\|_\F^2
 =\frac92g.
\]
Since $\E_{\mu_4}\sum_i\log T_i=-9H_3=-33/2$,
summing the preceding identity gives the extra log mean
$9/20-\langle H,MH\rangle/2$ for $\|H\|_F=1$.
Linearity gives this identity for $W$.
Now $d\nu=f_Wd\mu_4$ is a probability measure, and
$\E_{\mu_4}X=\E_\nu(X/f_W)$.
Apply Jensen and subtract $\E_\nu\log f_W=J(W)$.
The densities are bounded. A nonzero polynomial has a null set of
Gaussian, hence spherical, measure zero, and the density's zero set is
contained in such a set. Also $f\log f$ extends continuously at zero,
and each $\log T_i$ is integrable under bounded densities.
Thus the change of measure and the log calculation are valid at
singular operators as well.
\end{proof}

\subsection{A quadratic relation and its complete necessary spectral domain}
The nine linear equations
$p_H(\overline v_i^{\,T})=0$ in ten unknown symmetric coefficients have a
nonzero solution. Normalize it to $\|H\|_F=1$; then $MH=0$.

Every complex symmetric matrix admits a unitary congruence
$H=U\operatorname{diag}(\sigma_0,\ldots,\sigma_3)U^T$ with
$\sigma_0\geq\cdots\geq\sigma_3\geq0$.
One elementary proof maximizes $\operatorname{Re}(z^THz)$ on the unit
sphere. After a phase choice its maximum is nonnegative, and the
Lagrange equation is $Hz=\sigma\overline z$.
Taking $\overline z$ as the first column of $U$ makes the first row
and column of $U^*H\overline U$ diagonal. Repeat on the remaining
symmetric block; the zero-maximum case is the zero polynomial.

Change coordinates by $u_{\rm new}=U^Tu_{\rm old}$ and rows by
$v_{i,\rm new}=v_{i,\rm old}\overline U$. This preserves both linear
forms and tightness. In these coordinates the relation is
$\sum_j\sigma_j\overline v_{ij}^{,2}=0$.
The triangle inequality and the actual column norms give
$\sigma_0\leq\sigma_1+\sigma_2+\sigma_3$.
Since $\sigma_0>0$, define $(t,u,v)=(\sigma_1,\sigma_2,\sigma_3)/\sigma_0$.
Every input lies in
\begin{equation}\label{eq:domain}
 \mathcal D=\{(t,u,v):1\geq t\geq u\geq v\geq0,\quad t+u+v\geq1\}.
\end{equation}
Only necessity is claimed; the certificate will prove a sufficient bound
on this entire, possibly larger, domain.

\subsection{A positive lifted operator and its true Gram cost}
For real diagonal $K=\operatorname{diag}(k_0,\ldots,k_3)$ define
$L_K(X)=KX+XK$ on $\HH$. Its eigenvalues are $2k_j$ on the four
diagonal basis vectors, and $k_i+k_j$ on the six symmetric cross terms.
For a fixed rational vector $r=(1,t_c,u_c,v_c)$ put
$h_c=\operatorname{diag}(r)/\|r\|$, and choose rational parameters
\begin{equation}\label{eq:lift}
 W_c=a|h_c\rangle\langle h_c|+L_K,\quad
 a\geq0,\quad a+5\sum_jk_j=1.
\end{equation}
Its complete positivity condition is
\begin{equation}\label{eq:psd}
 k_i+k_j\geq0\ (i<j),\qquad
 B=2\operatorname{diag}(k)+a\frac{rr^T}{\|r\|^2}\succeq0.
\end{equation}
The verifier checks all 15 principal minors of the four-dimensional
block, as well as the six scalar conditions. Nonnegative principal
minors imply positive semidefiniteness: every leading principal minor
of $B+\varepsilon I$ is positive, so Sylvester's criterion followed
by $\varepsilon\downarrow0$ applies. Individual $k_j$ may be negative;
positivity is required of $W_c$, not of the auxiliary $K$.

The corresponding density is
\begin{equation}\label{eq:centerf}
 f_c(u)=10a\left|\sum_j\frac{r_j}{\|r\|}u_j^2\right|^2
                   +20\sum_j k_j|u_j|^2.
\end{equation}
The identity for the second term follows by completeness of an
orthonormal symmetric-matrix basis; on a unit vector its summed quadratic
weight is twice $u^*Ku$.
For the actual frame,
$\langle a_i,L_Ka_i\rangle=2\sum_j k_j|v_{ij}|^2$.
Hence $\tr(ML_K)=(9/2)\sum_j k_j$ and
\begin{equation}\label{eq:centergain}
 G(W_c)=\frac9{20}a-\frac a2\langle h_c,Mh_c\rangle-J(W_c).
\end{equation}
Furthermore Cauchy--Schwarz and tightness give, for every symmetric $H$,
\[
 \langle H,MH\rangle
 =\sum_i|\overline v_i H\overline v_i^{\,T}|^2
 \leq\sum_i\|H\overline v_i^{\,T}\|^2
 =\tfrac94\|H\|_F^2.
\]
Thus $0\preceq M\preceq(9/4)I$. For the actual unit relation
$h=\operatorname{diag}(1,t,u,v)/\|(1,t,u,v)\|$, $Mh=0$, and
\begin{equation}\label{eq:angle}
 G(W_c)\geq\frac9{20}a-J(W_c)
  -\frac{9a}{8}\left(1-
       \frac{(r\cdot r')^2}{\|r\|^2\|r'\|^2}\right),\quad
 r'=(1,t,u,v).
\end{equation}
This retains the cost of using a fixed center that is not the actual
relation of the input.

For a closed box $l\leq(t,u,v)\leq b$, define
\begin{equation}\label{eq:anglebox}
 A_{\rm box}=\min\left(1,
 \frac{\max_{\text{eight vertices }r'}
       [\|r\|^2\|r'\|^2-(r\cdot r')^2]}
      {\|r\|^2(1+\sum_j l_j^2)}\right).
\end{equation}
The numerator is a convex quadratic function of $r'$; its maximum on a
box occurs at a vertex by successive coordinate maximization. The
denominator bound holds because the coordinates are nonnegative.
Equation \eqref{eq:angle} therefore holds on the entire box with its
parenthesized angle replaced by $A_{\rm box}$.

\section{Exact entropy and the full order-nine certificate}
\subsection{All integer moments of a center density}
Put $d_j=r_j^2/\|r\|^2$, $A_j=10ad_j$, $B_j=20k_j$, and choose a
positive integer $q$ clearing their denominators. Define the formal
coefficients
\begin{equation}\label{eq:gen}
 C_{t,m}=[z^ty^m]\prod_{j=0}^3
             \bigl((1-qB_jy)^2-4qA_jz\bigr)^{-1/2}.
\end{equation}
The squared moduli $x_j=|u_j|^2$ have the Dirichlet$(1,1,1,1)$ law;
the four phases are independent uniform variables. Phase integration
in a quadratic-form product requires equal occupations in each
coordinate. A single coordinate contributes
\[
 (qA_j)^\alpha(qB_j)^\beta
       \frac{(2\alpha+\beta)!}{(\alpha!)^2\beta!}
\]
to the generating function; this is an integer, also when $B_j<0$.
The Dirichlet integral of an occupied monomial is
$6\prod_j(2\alpha_j+\beta_j)!/(N+t+3)!$ when
$\sum\alpha_j=t$, $\sum\beta_j=N-t$.
Expansion and integration therefore give, for every integer $N\geq0$,
\begin{equation}\label{eq:moments}
 \E f_c^N=\frac{6N!}{q^N}
             \sum_{t=0}^N\frac{t!C_{t,N-t}}{(N+t+3)!}.
\end{equation}
This is an all-degree identity; finitely many moments are used only
after a full remainder estimate is available.

\subsection{A uniform entropy remainder, including zeros}
Phase triangle inequality and convexity on the simplex give
\begin{equation}\label{eq:maxf}
 0\leq f_c\leq M_c:=\max_j(10ad_j+20k_j).
\end{equation}
The lower bound uses complete operator positivity. For the upper bound,
replace the quadratic term by $10a(\sum_j\sqrt{d_j}x_j)^2$;
this plus the linear term is convex in $x$, so its maximum is at a
simplex vertex. Normalization implies $M_c\geq1$.

Let $c=M_c/2$ and $z=f_c/c-1\in[-1,1]$. The absolutely uniformly
convergent expansion of $(1+z)\log(1+z)$ gives
\begin{equation}\label{eq:entropy}
 J=\log c+c\E\left[z+
          \sum_{N=2}^{\infty}\frac{(-1)^Nz^N}{N(N-1)}\right].
\end{equation}
At $z=-1$ use the continuous extension. For even $K=24$ the absolute
remainder is at most
\begin{equation}\label{eq:entropytail}
 c\E z^{24}\sum_{N=25}^{\infty}\frac1{N(N-1)}
       =\frac{c}{24}\E z^{24}.
\end{equation}
All moments in this expression are rational by \eqref{eq:moments}.
To enclose the only logarithm, scale a rational $x>0$ into $s\in[1,2]$
by $x=2^js$, and set $w=(s-1)/(s+1)$. Use
\[
 2\sum_{a=0}^{15}\frac{w^{2a+1}}{2a+1}
 \leq\log s\leq
 2\sum_{a=0}^{15}\frac{w^{2a+1}}{2a+1}
   +\frac{2w^{33}}{33(1-w^2)}.
\]
The same formula bounds $\log2$; for $j<0$ exchange the endpoints
when multiplying its interval by $j$. Equations \eqref{eq:entropy}
and \eqref{eq:entropytail} give a rational upper bound $J^+_c$.

\subsection{Complete closed-domain verification}
The tight frame gives $s_i=9/4$, so Lemma \ref{lem:rows} yields
\begin{equation}\label{eq:Qtight}
 Q\leq b(9/4)^9,\qquad b(9/4)=\tfrac34\sqrt2+\tfrac14\sqrt[3]6.
\end{equation}
The needed log gain is
\begin{equation}\label{eq:threshold}
 h=\tfrac92\log b(9/4)+\tfrac{33}{2}-\log D_9.
\end{equation}
It is approximately $0.1737371780$. Rational root and log bounds
produce its rigorous upper enclosure $h^+$.

Start from $[0,1]^3$ and bisect $t,u,v$ cyclically. A box is pruned
only when it cannot contain a point of \eqref{eq:domain}.
If its upper coordinates are $b_t,b_u,b_v$, put
$T=b_t$, $U=\min(T,b_u)$, $V=\min(U,b_v)$.
The strict conditions
\[
 T<l_t\quad\text{or}\quad U<l_u\quad\text{or}\quad V<l_v
       \quad\text{or}\quad T+U+V<1
\]
are sufficient for pruning. Equality is never pruned by these tests.
For each accepted box its stored rational center and parameters satisfy
\eqref{eq:psd}, and
\begin{equation}\label{eq:gainbox}
 g_{\rm box}:=\frac9{20}a-J^+_c-\frac{9a}{8}A_{\rm box}>h^+.
\end{equation}
Round the left side downward to the $10^{-12}$ grid.

The exact certificate contains 55 accepted boxes and 70 pruned boxes,
forming 125 leaves and 124 internal nodes. There are no unresolved boxes;
maximum depth is 11. Every internal node has both children, no leaf is
an ancestor of another leaf, and every box's endpoints are recovered
from its path. The union of the closed children is the closed parent.
As no pruned leaf meets $\mathcal D$, the accepted leaves cover all of
$\mathcal D$, including its sorting and triangle boundaries.
The least stored gain is
\begin{equation}\label{eq:gmin}
 g_{\min}=\frac{21728696127}{125000000000}=0.173829569016.
\end{equation}
Rational factorial-root brackets and \eqref{eq:exp64} verify
\begin{equation}\label{eq:ninefinal}
 \frac{b(9/4)^9e^{33-2g_{\min}}}{D_9^2}
       \leq\rho_{9,t}<1.
\end{equation}
Every actual relation lies in a covered box, and its center operator
is legal for the original matrix. Equations \eqref{eq:quarticgain},
\eqref{eq:angle}, and \eqref{eq:gainbox} therefore prove the normalized
statement of Theorem \ref{thm:nine}. Lemma \ref{lem:normalization}
supplies its remaining statements.

The replay uses a different coefficient algorithm from the generator.
If the generating product in \eqref{eq:gen} is $F(z,y)$, then
\[
 \frac{F_z}{F}=\sum_j\frac{2qA_j}{(1-qB_jy)^2-4qA_jz}.
\]
This gives a recursive determination of all coefficients of total degree
at most 24. The $z^0$ coefficients are obtained by the Newton recurrence
for the complete homogeneous polynomials in $qB_j$. Entropy coefficients
are collected in raw moments before integration, rather than reusing
the generator's centered-moment table. The verifier also recomputes all
825 principal minors, all 70 exclusions, and the full 249-node tree.
It does not import the density locator or generator. Approximate
integration used to select possible centers plays no part in the proof.

\section{Reproducibility, review scope, and remaining questions}
The supplied package requires Python 3 and its standard library only.
Its main entry point is \code{review_all.py}. Inputs are copies of
immutable research certificates; SHA-256 hashes are checked before and
after replay. The arithmetic is integer or \code{fractions.Fraction}
arithmetic. The five verification jobs cover the following obligations.

\begin{center}
\begin{tabular}{@{}p{.38\textwidth}p{.50\textwidth}@{}}
\toprule
Obligation & Exact verification extent\\
\midrule
Factorial-root concavity & Eleven strict margins, on $[1,13]$\\
Order ten & 1907 closed boxes and a complete binary partition\\
Order eleven & 202 rectangles, 45 open strips, 46 boundaries\\
Orders 12--26 & 186 intervals, matching endpoints for 15 domains\\
All orders $n\geq27$ & Rational base and uniform recurrence inequality\\
Order-nine tight frames & 55 accepted boxes, 70 exclusions, 825 minors\\
\bottomrule
\end{tabular}
\end{center}

The accompanying report maps the manuscript's proof steps to their
reviewed dependencies and records the fresh successful replays. The
checks were performed in the main research context. A second arithmetic
implementation reduces some implementation risks, but does not by itself
constitute an independent review of the mathematical reductions.
The project protocol's independent-review promotion has therefore not
been recorded. This factual review status is separate from presenting
the statements with complete proof arguments.

The proof for order nine relies essentially on \eqref{eq:tight}.
For a general frame the gain contains its full frame operator together
with the fourth-order Gram operator. The unweighted mean and the cost of
the lifting terms used here cannot be retained unchanged. Theorem
\ref{thm:nine} is consequently not extended by a continuity argument to
all order-nine matrices. Orders seven and eight at rank four, general
non-tight order nine, and arbitrary higher ranks remain outside the
scope of this manuscript. A complete comparison with prior rank-restricted
results is also still needed before any priority claim is made.

\appendix
\section{Certificate files and validation contracts}
\label{app:files}
All paths in this appendix are relative to the manuscript package.
The ten fixed input files are listed, with their complete hashes, in
\code{supplement/source_manifest.json}.

\begin{longtable}{@{}p{.56\textwidth}p{.36\textwidth}@{}}
\toprule
File under \code{supplement/code/} & Role\\
\midrule
\endhead
\code{E111_rank_four_order_ten_certificate.json} & All ten-order boxes and rational bounds\\
\code{recheck_rank_four_ten_certificate.py} & Direct formula replay and tree check\\
\code{E109_rank_four_order_eleven_certificate.json} & All eleven-order rectangles\\
\code{recheck_rank_four_eleven_certificate.py} & Arithmetic and boundary coverage\\
\code{E16_results.json} & Only interval endpoints reused for orders 12--26\\
\code{replay_rank_four_finite_and_tail.py} & Fresh finite bounds and infinite-tail arithmetic\\
\code{replay_rank_three_bridge.py} & Exact roots, exponential bounds, concavity margins\\
\code{verify_bregman_spectral.py} & Rational logarithm enclosures\\
\code{E116_general_null_lift_certificate.json} & All order-nine centers, accepted and pruned boxes\\
\code{recheck_general_null_lift.py} & Coefficient recurrence, PSD, entropy, and tree replay\\
\bottomrule
\end{longtable}

The root-bracket algorithm starts with integer endpoints $L$ and
$(k+1)L$ and bisects until their difference is one. It checks the
power comparisons explicitly. A bound reported after rounding to
$10^{-12}$ is always rounded in the conservative direction: ratios
upward and gains downward. The verifiers require a successful complete
cover, a strict final inequality, and no unresolved cells; hitting a
search budget with unresolved cells would not certify a theorem.

The finite covers are part of the computer-assisted proofs. The
infinite tail instead requires only the two rational comparisons in
\eqref{eq:tailcert} after its analytic reduction. No numerical sampling
or unstated limit argument is used to bridge from finitely many orders
to infinitely many orders.

\end{document}